\documentclass[reqno,a4paper, 10pt]{amsart}
\usepackage{amsfonts}
\usepackage{amssymb, amsmath,amsthm,enumerate}
\usepackage[mathscr]{eucal}
\usepackage{url}

\theoremstyle{plain}
\newtheorem{thm}{Theorem}
\newtheorem{cor}[thm]{Corollary}

\newtheorem*{op}{Hypothesis}
\newtheorem*{GNCT}{The Guthrie-Nymann Classification Theorem}
\newtheorem*{OP}{Open Problem}

\begin{document}
\title{Achievable Cantorvals almost without reversed Kakeya conditions}
\author[F. Prus-Wi\'{s}niowski and J. Ptak]{ Franciszek Prus-Wi\'{s}niowski and Jolanta Ptak}

\newcommand{\eacr}{\newline\indent}

\address{\llap{*\,}Franciszek Prus-Wi\'{s}niowski\eacr
Instytut Matematyki\eacr
Uniwersytet Szczeci\'{n}ski\eacr
ul. Wielkopolska 15\eacr
PL-70-453 Szczecin\eacr
Poland\acr ORCID 0000-0002-0275-6122}
\email{franciszek.prus-wisniowski@usz.edu.pl}

\address{\llap{*\,}Jolanta Ptak \eacr
P\c{e}zino 37 \eacr
73-131 P\c{e}zino\eacr
Poland\eacr
 ORCID  0000-0003-1464-060X}
\email{jola19941504@wp.pl}

\subjclass[2020]{Primary: 40A05; Secondary: 11B05, 28A75}
\keywords{ Achievement set, set of subsums, Cantorval, Kakeya conditions}

\date{\today}

\newcommand{\acr}{\newline\indent}

\begin{abstract}
Examples of achievable Cantorvals are constructed with reversed Kakeya conditions only on a set of asymptotic density zero which answers in positive the Problem 5.2 from \cite{MM}. Additionally,  the Lebesgue measure of the boundaries of these Cantorvals is found to be zero which does not answer the still open problem of existence of achievable Cantorvals with boundaries of positive measure.
\end{abstract}

\maketitle

\section{Introduction}
The investigation of the topological nature of sets of all subsums of  absolutely convergent real series began over 100 years ago with the paper \cite{Kakeya} by Soichi Kakeya (see also \cite{Kakeya2}). He discovered that the set of subsums of a convergent infinite series of positive terms always is compact and perfect. Kakeya noticed that there are at least three possibilities: the set of subsums can be a finite set, a \textsl{multi--interval set} (that is, a union of a finite family of closed and bounded intervals) or a Cantor set (that is, a set homeomorphic to the classic ternary Cantor set). Kakeya's results were rediscovered twice: by H. Hornig in 1941 \cite{H} and by P. Kesava Menon in 1948 \cite{Menon}, but an important discovery was made only in 1980 by A.D. Weinstein and B.D. Shapiro. They announced in \cite{WS} an example of an absolutely convergent series with the set of subsums being a Cantorval (that is, a bounded, regularly closed subset of $\mathbb R$ with boundary being a Cantor set). Actually, the name Cantorval originated in \cite{MO} and various topological characterizations and other properties of Cantorvals were discussed in \cite{MO}, \cite{BFPW1}, \cite{MNP} and \cite{BKP}. All Cantorvals are homeomorphic to each other and a set homeomorphic to a Cantorval is a Cantorval. The complete classification of sets of subsums of absolutely convergent series was established by J.A. Guthrie and J.E. Nymann in \cite{GN88} (see also \cite{NS} for an essential amendment of the initial proof). Before we state it, we need a few basic definitions.

Throughout this note, we will consider only non-increasing sequences $(a_n)$ of non-negative terms such that $\sum a_n$ converges. The \textsl{achievement set} of the sequence $(a_n)$ (or, equivalently, the \textsl{set of subsums} of the series $\sum a_n$) is defined by
$$
E\ =\ E(a_n)\ :=\ \left\{x\in\mathbb R:\ \exists\ A\subset\mathbb N\quad\ x=\sum_{n\in A}a_n\,\right\}.
$$
A set $B\subset\mathbb R$ is said to be \textsl{achievable} is there is a sequence $(a_n)$ such that $B=E(a_n)$. Given any positive integer $k$, we define the \textsl{set of $k$-initial subsums} of $(a_n)$ by
$$
F_k\ =\ F_k(a_n)\ :=\ \left\{x\in\mathbb R:\ \exists\ A\subset\{1,2,\ldots,k\,\} \qquad\ x=\sum_{n\in A}a_n\,\right\}.
$$
Additionaly, we set $F_0:=\{0\}$ independently of the sequence $(a_n)$.
Throughout this note, the $n$-th remainder of a series $\sum a_i$ will be denoted by $r_n:=\sum_{i>n}a_i$. In particular, $r_0$ stands for the sum of the series. Connectivity components of $E$ that are not singletons are called \textsl{$E$-intervals}. Connectivity components of $[0,r_0]\setminus E$ are called \textsl{$E$-gaps}. We are now ready to formulate the complete topological classification of achievement sets.

\begin{GNCT}
The achievement set of any absolutely convergent real series is of one of the following four types:
\begin{itemize}
\item[(i)] a finite set;
\item[(ii)] a multi-interval set;
\item[(iii)] a Cantor set;
\item[(iv)] a Cantorval.
\end{itemize}
\end{GNCT}

Clearly, any two sets of the type (i) or any two sets of the type (ii) are homeomorphic if and only if they have the same number of connectivity components. Since the topological definition of a Cantorval is not very intuitive, it is useful to keep in mind the fundamental example of a Cantorval which is obtained by adding to the classic ternary Cantor set $C$ all open invervals removed from $[0,1]$ in the odd steps of the standard geometric construction of C \cite[pp.325-326]{GN88}. It is worth mentioning that this model Cantorval is not achievable (\cite{BFGSW}, \cite{BFHLP}). After the publication of the Guthrie-Nymann Classification Theorem, the investigation of achievement sets slowly gained momentum. Among others, achievement sets served as a counterexample (see \cite{Sannami92} and \cite{PWT18}) to the Palis hypothesis \cite{Palis87} that the arithmetic sum (or difference) of two Cantor sets, both with Lebesgue measure zero, is either of Lebesgue measure zero or it contains an interval. Palis hypothesis came from the theory of dynamical systems and added much interest to the investigation of algebraic sums of Cantor sets which is a thriving field of research (see, for example, \cite{PY97}, \cite{E07}, \cite{Pourba18}, \cite{T19}, \cite{MNP}). Achievement sets, due to their rigid nature, are a relatively small family of compact sets in the real line as can be seen from the fact that Palis hypothesis is generically true for dynamically defined Cantor sets \cite{MY01}. The elementary nature of achievement sets is often misleading because many seemingly straightforward facts require quite complicated proofs or constructions. In our opinion, the most challenging open problem related to achievement sets is to find new and easy to use sufficient conditions on $(a_n)$ that would guarantee that the set of subsums is a Cantor set.

Analytic characterization of absolutely convergent series having a finite set of subsums is obvious: $E(a_n)$ is a finite set if and only if $a_n=0$ for all sufficiently large indices $n$. The remaining three cases of possible topological types are less obvious. It was Kakeya who obtained first results in the direction. As a probing tool he considered inequalities between the terms of a series and the corresponding remainders. The equality $E(a_n)=\sum_{a_n<0}a_n\,+\,E(|a_n|)$ (see \cite[p.316]{H} or \cite[Lemma 1.2]{Jones}) shows that the achievement set of an absolutely convergent series $\sum a_n$ always is a translate of an achievement set of a series of nonnegative terms. Since we already know the nature of series with almost all terms equal to 0 and since neither permutations of terms nor removal of any  number of zero terms change the achievement set, it suffices to restrict our attention to convergent series of positive and monotone terms as Kakeya did in his investigations. Thus, from now on till the end of this note we will work under this restriction. Given an $n\in\mathbb N$, we will say that a series $\sum a_n$ satisfies the \textsl{$n$-th Kakeya condition} if $a_n>r_n$. We will also say that it satisfies the \textsl{$n$-th reversed Kakeya condition} if $a_n\le r_n$. For that reason, the following symbols will be handy:
$$
K(a_n)\,:=\,\bigl\{n\in\mathbb N:\ a_n>r_n\,\bigr\} \qquad\text{and}\qquad K^c(a_n)\,:=\,\bigl\{n\in\mathbb N:\ a_n\le r_n\,\bigr\}.
$$
These sets complement each other to $\mathbb N$. Each subset of $\mathbb N$ falls into exactly one of the following three categories: finite sets,  sets with finite complement and, finally, infinite sets with infinite complement. The first two of these categories showed up among Kakeya's results on relationship between the terms of a series and the nature of the sets of subsums.

\begin{thm}
\label{t1}
$\text{card}\,K(a_n)<\infty$ if and only if $E(a_n)$ is a multi-interval set.
\end{thm}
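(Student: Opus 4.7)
My plan is to treat the two implications separately, both relying on Kakeya's original dichotomy between the cases $a_n > r_n$ and $a_n \le r_n$.

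\textbf{Forward direction} ($K(a_n)$ finite $\Rightarrow E(a_n)$ multi-interval). Assuming $K(a_n) \subset \{1, \ldots, N\}$, so that $a_n \le r_n$ for every $n > N$, I would first prove that the achievement set of the tail series $\sum_{i > N} a_i$ equals $[0, r_N]$, and then conclude that
\[
E(a_n) \;=\; F_N + [0, r_N]
\]
is a finite union of translates of a closed interval, hence a multi-interval set. For the tail equality, given any $y \in [0, r_N]$, I will run a greedy construction: for $i > N$, include $a_i$ in the running sum $s_i$ iff $s_{i-1} + a_i \le y$. A short induction shows that $0 \le y - s_i \le r_i$ at every step, with the reversed Kakeya inequality being used exactly when $a_i$ is skipped (then $y - s_{i-1} < a_i \le r_i$). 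Since $r_i \to 0$, the limit yields $y$ as a subsum.

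\textbf{Reverse direction} ($E(a_n)$ multi-interval $\Rightarrow K(a_n)$ finite). I would argue by contrapositive. The decisive observation is that every $n \in K(a_n)$ contributes a genuine $E$-gap $(r_n, a_n)$: the endpoints $r_n = \sum_{i > n} a_i$ and $a_n$ are obviously subsums, while any interior $y \in (r_n, a_n)$ fails to be one, because $y < a_n$ and the monotonicity of $(a_i)$ force any representing set $A$ to satisfy $A \subset \{n+1, n+2, \ldots\}$, whence $y \le r_n$, contradicting $y > r_n$. Since the terms are strictly positive, the left endpoints $r_n$ are strictly decreasing in $n$, and a brief check shows that the gaps $(r_n, a_n)$ for different $n \in K(a_n)$ are pairwise disjoint (otherwise an endpoint of one, which lies in $E$, would fall inside another, which is disjoint from $E$). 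Thus an infinite $K(a_n)$ produces infinitely many distinct $E$-gaps, which contradicts the structure of a multi-interval set.

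I anticipate no serious conceptual obstacle: both directions are essentially Kakeya's original observations. The only place requiring care is the inductive estimate $0 \le y - s_i \le r_i$ in the forward direction, where the reversed Kakeya inequality is needed at precisely the skipped steps. The reverse direction is essentially a one-line structural remark once monotonicity is invoked to forbid small subsums from using any of the first $n$ terms.
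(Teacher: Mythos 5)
Your proof is correct. Note, however, that the paper itself offers no proof of this statement: it is quoted as one of Kakeya's classical results, so there is nothing internal to compare against. Both of your directions are the standard arguments --- the greedy selection with the invariant $0\le y-s_i\le r_i$ (where $a_i\le r_i$ is used exactly at the skipped steps) for the forward implication, and the observation that each $n\in K(a_n)$ yields a gap $(r_n,a_n)$ separated from the others by the point $a_n\in E$ for the reverse --- and both are sound as written.
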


\begin{thm}
\label{t2}
If $\text{card}\,K^c(a_n)<\infty$, then $E(a_n)$ is a Cantor set.
\end{thm}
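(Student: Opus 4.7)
The plan is to verify the three defining properties of a Cantor set in $\mathbb R$ for $E=E(a_n)$: compactness, perfectness, and the absence of any nondegenerate interval. The first two hold for every achievement set of a convergent series of positive terms, as recorded in the introduction, so the whole argument will concentrate on ruling out intervals.

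By hypothesis there exists $N\in\mathbb N$ with $a_n>r_n$ for every $n\ge N$. The first step is to write $E=F_{N-1}+E_N$ (Minkowski sum), where $E_N:=E(a_N,a_{N+1},\dots)\subset[0,r_{N-1}]$ is the achievement set of the tail and $F_{N-1}$ is a finite set of at most $2^{N-1}$ elements. This presents $E$ as a finite union of translates of $E_N$ and reduces the task to two items: first, showing that $E_N$ is itself a Cantor set; second, transferring the absence of intervals from $E_N$ to this finite union.

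For the first item, consider the coding map
\[
\phi\colon\{0,1\}^{\{N,N+1,\dots\}}\to E_N,\qquad \phi\bigl((\epsilon_n)\bigr):=\sum_{n\ge N}\epsilon_n\,a_n,
\]
which is continuous (uniform convergence of the partial sums) and surjective by definition of $E_N$. Injectivity is exactly where the tail Kakeya assumption enters: if two sequences $(\epsilon_n)\ne(\delta_n)$ gave the same value and first disagreed at an index $n_0\ge N$, then the term $a_{n_0}$ contributed by one side would have to be matched by a subsum of $a_{n_0+1},a_{n_0+2},\dots$ from the other side, forcing $a_{n_0}\le r_{n_0}$, contrary to $n_0\ge N$. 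A continuous bijection from a compact space onto a Hausdorff space is a homeomorphism, so $E_N$ is homeomorphic to the Cantor space $\{0,1\}^{\mathbb N}$ and is itself a Cantor set.

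For the second item, suppose for contradiction that $E$ contains a nondegenerate closed interval $I$. Since $I\subset\bigcup_{s\in F_{N-1}}(s+E_N)$ expresses the complete metric space $I$ as a cover by finitely many closed sets, the Baire category theorem, applied to $I$, supplies some $s_0\in F_{N-1}$ for which $I\cap(s_0+E_N)$ has nonempty interior in $I$; translating by $-s_0$ then places an open subinterval of $I$ inside $E_N$, contradicting the total disconnectedness just proved. The only delicate point is the injectivity verification for $\phi$, where the strict inequality $a_{n_0}>r_{n_0}$ must be read off at the correct index; once that is in hand, the Baire step is essentially the elementary remark that an interval cannot be written as a finite union of closed nowhere-dense subsets.
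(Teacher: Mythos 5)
Your argument is correct. Note that the paper itself offers no proof of this statement: it is quoted as one of Kakeya's classical results, so there is no in-paper argument to compare against. Your route --- splitting $E=F_{N-1}+E_N$, showing the coding map $\phi$ is a continuous bijection from $\{0,1\}^{\{N,N+1,\dots\}}$ onto $E_N$ (injectivity coming precisely from $a_{n_0}-r_{n_0}>0$ at the first index of disagreement), and then observing that a finite union of translates of a nowhere dense closed set cannot contain an interval --- is the standard modern proof and all steps check out. Two small remarks: the Baire category theorem is heavier than needed, since a finite union of nowhere dense sets is nowhere dense by elementary topology; and for completeness one should note that perfectness of $E$ (quoted from the introduction) together with compactness and the absence of intervals is what invokes the topological characterization of the Cantor set, which your homeomorphism $E_N\cong\{0,1\}^{\mathbb N}$ in fact already delivers for the tail set directly.
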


Of course, Theorem \ref{t2} can be rephrased as: if the set $K(a_n)$ has a finite complement, then $E(a_n)$ is a Cantor set. The implication cannot be reversed which can be seen from the series $\sum b_n$ with $b_{2k}=b_{2k-1}=\frac1{4^k}$ for all $k$. The easiest way to observe that  $E(b_n)$ is a Cantor set, is to apply  Theorem 16 from \cite{BFPW2}. Thanks to  Guthrie-Nymann Classification Theorem, the above theorems imply the following simple corollary.

\begin{cor}
\label{c3}
If $E(a_n)$ is a Cantorval, then $\text{card}\,K(a_n)=\,\text{card}\,K^c(a_n)=\infty$.
\end{cor}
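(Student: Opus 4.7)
The plan is to argue by contrapositive, separately for each of the two sets, using the two theorems stated just above and the fact that a Cantorval is neither a multi-interval set nor a Cantor set (by the Guthrie--Nymann Classification Theorem, the four listed types are mutually exclusive, since a Cantorval has non-empty interior yet is not a union of finitely many intervals).

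First I would suppose, toward a contradiction, that $\operatorname{card} K(a_n)<\infty$. Then Theorem~\ref{t1} applies directly and yields that $E(a_n)$ is a multi-interval set, contradicting the assumption that $E(a_n)$ is a Cantorval. Hence $\operatorname{card} K(a_n)=\infty$.

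Next I would suppose, again toward a contradiction, that $\operatorname{card} K^c(a_n)<\infty$. Then Theorem~\ref{t2} yields that $E(a_n)$ is a Cantor set, again contradicting the assumption that $E(a_n)$ is a Cantorval. Hence $\operatorname{card} K^c(a_n)=\infty$.

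There is no real obstacle here: the corollary is essentially the disjunction of the two contrapositives, justified by the mutual exclusivity of the four classes in the Guthrie--Nymann theorem. The only thing worth being careful about is that Theorem~\ref{t1} is an ``if and only if'' while Theorem~\ref{t2} is only a one-way implication, so the two halves have to be handled by contrapositive with the correct directions, as above.
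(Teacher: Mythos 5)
Your argument is correct and matches the paper's intended reasoning exactly: the paper states the corollary as an immediate consequence of Theorems \ref{t1} and \ref{t2} together with the mutual exclusivity of the four types in the Guthrie--Nymann Classification Theorem, which is precisely the contrapositive argument you give. Nothing further is needed.
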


The above observations determine uniquely the topological type of $E(a_n)$ if $K(a_n)$ is a finite set or a set with finite complement. Kakeya believed that \linebreak $\text card\,K(a_n) = \infty$ implies that $E(a_n)$ is a Cantor set. In \cite{Kakeya2} he wrote sincerely: \textit{ that the relation $a_n\le r_n$ fails only for an infinite number of values of $n$, seems to be the necessary and sufficient condition that the set $E(a_n)$ should be nowhere dense; but I have no proof of it}. The Guthrie-Nymann Cantorval $E(c_n)$, where $c_{2n-1}=\frac3{4^n}$ and $c_{2n}=\frac2{4^n}$, is a counterexample  to the hypothesis, since $K(c_n)=2\mathbb N$ (see \cite{GN88}, \cite{BPW}). The same example combined with the earlier one of $E(b_n)$ shows that the set of Kakeya conditions does not determine the topological type of achivement sets uniquely, because $K(c_n)=K(b_n)$. However, there is a pinch of truth in Kakeya's prediction, since every infinite set $K\subset\mathbb N$ with infinite complement is the set of Kakeya conditions for a suitable achievable Cantor set. Namely, three years ago, J. Marchwicki and P. Miska proved the following  non-trivial result in \cite{MM}. A simpler proof of it can be found in \cite{MPP}, but at the cost of losing the uniqueness of subsums.

\begin{thm}
\label{t4}
For any $K\subset\mathbb N$ with $\text{card}\,K=\text{card}\,K^c=\infty$, there is a series $\sum a_n$ such that $K(a_n)=K$ and $E(a_n)$ is a Cantor set.
\end{thm}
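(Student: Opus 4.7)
The plan is to parametrize a candidate sequence by the ratios of successive remainders, $t_n := r_n/r_{n-1}$, where $r_n = \sum_{i>n}a_i$. Since $a_n = r_{n-1}(1-t_n)$, the $n$-th Kakeya condition $a_n > r_n$ is equivalent to $t_n < 1/2$, and the monotonicity requirement $a_n \geq a_{n+1}$ becomes $t_n(2-t_{n+1}) \leq 1$. The problem thus reduces to choosing a sequence $(t_n) \subset (0,1)$ with $t_n < 1/2$ precisely when $n \in K$, satisfying the monotonicity inequality, and such that the resulting $E(a_n)$ is a Cantor set.

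I would take $r_0 := 1$, set
\[
t_n := \tfrac{1}{3} \text{ if } n \in K, \qquad t_n := \tfrac{1}{2} \text{ if } n \in K^c,
\]
and define $r_n := \prod_{i=1}^{n} t_i$ and $a_n := r_{n-1} - r_n$. Positivity is clear, and the monotonicity inequality $t_n(2-t_{n+1}) \leq 1$ reduces to a four-case check over $(t_n,t_{n+1}) \in \{1/3, 1/2\}^2$, each case producing a value at most $5/6$. Convergence $r_n \to 0$ is automatic because $K$ is infinite, and $K(a_n) = K$ holds by the very definition of $t_n$.

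The crucial step is that $E(a_n)$ is a Cantor set, which I would establish via a Lebesgue-measure estimate. Since $E(a_n) \subset F_n + [0, r_n]$ and $|F_n| \leq 2^n$,
\[
\lambda\bigl(E(a_n)\bigr) \;\leq\; 2^n r_n \;=\; \prod_{i=1}^{n}(2t_i) \;=\; \left(\tfrac{2}{3}\right)^{|K \cap [1,n]|},
\]
because $2t_i = 1$ for $i \in K^c$ and $2t_i = 2/3$ for $i \in K$. Since $K$ is infinite, this bound tends to $0$, hence $\lambda(E(a_n)) = 0$. The Guthrie--Nymann Classification Theorem then forces $E(a_n)$ to be a Cantor set: $E(a_n)$ cannot be finite, as $(a_n)$ has infinitely many positive terms, and neither multi-interval sets nor Cantorvals can have Lebesgue measure zero since each is the closure of a nonempty open set.

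The delicate point is that the reversed-Kakeya value $t_n = 1/2$ at indices in $K^c$ must be \emph{exactly} $1/2$: any strictly larger value would introduce factors $2t_n > 1$ and could spoil $\prod(2t_i) \to 0$, while the reversed Kakeya condition forces $t_n \geq 1/2$ from below. So the construction sits on the boundary of what the measure estimate permits, but once this choice is fixed on $K^c$ any $t_n \in (0, 1/2)$ bounded away from $0$ works on $K$, and $t_n = 1/3$ is merely the most convenient.
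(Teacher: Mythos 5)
Your proof is correct. Note that the paper itself does not prove Theorem~\ref{t4}; it only quotes it from \cite{MM}, remarking that the original proof yields uniqueness of subsum representations (from which the Cantor-set conclusion follows directly) and that a simpler proof in \cite{MPP} sacrifices that uniqueness. Your route is different from the representation-theoretic one and is pleasantly elementary: reparametrize by the remainder ratios $t_n=r_n/r_{n-1}$, so that the $n$-th Kakeya condition becomes $t_n<\tfrac12$ and monotonicity becomes $t_n(2-t_{n+1})\le 1$, then choose $t_n=\tfrac13$ on $K$ and $t_n=\tfrac12$ on $K^c$ and kill the interior by the covering estimate $\lambda(E)\le 2^n r_n=\prod_{i\le n}(2t_i)=(2/3)^{\operatorname{card}(K\cap[1,n])}\to 0$, after which the Guthrie--Nymann classification leaves only the Cantor-set alternative (finite sets are excluded by positivity of all terms, and multi-interval sets and Cantorvals have nonempty interior, hence positive measure). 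All the individual verifications check out: the four-case monotonicity check indeed tops out at $5/6$, the identification $K(a_n)=K$ is immediate from the choice of $t_n$ (with equality $a_n=r_n$ on $K^c$ giving the reversed condition), and your closing observation that $t_n$ must equal exactly $\tfrac12$ on $K^c$ correctly identifies the rigidity of the argument. What this approach buys is brevity and transparency at the price of giving no information about the representation structure of points of $E(a_n)$; the approach of \cite{MM} is longer but produces a series in which every subsum has a unique representing set, which is a strictly stronger conclusion.
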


They investigated also if the Cantor set in the conclusion of Theorem \ref{t4} can be replaced by a Cantorval. The problem is not easy by all means. In all known examples of achievable Cantorvals, at least half (in the sense of asymptotic density) of the Kakeya conditions are reversed. For example, the historically first example of an achievable Cantorval was given by the Weinstein-Shapiro series $\sum d_n$ with $d_n=\frac3{10}\cdot\frac{9-m}{10^k}$ where $(k,m)$ is the unique pair of positive integers in $\mathbb N\times\{1,2,3,4,5\}$ such that $n=5(k-1)+m$ \cite{WS}. Since $K(d_n)=\{n\in\mathbb N:\ n=0(\text{mod}\,5)\,\}$, we have $d(K^c(d_n))=\frac45$. When we look at the Guthrie-Nymann series $\sum c_n$, then we get $d(K^c(c_n))=\frac12$.
Thus, Marchwicki and Miska investigated the auxilliary question of if it is possible to obtain a Cantorval from a sequence with  relatively fewer reversed Kakeya conditions and they obtained the following result in terms of asymptotic density. Let us recall its definition first. Given a set $C\subset \mathbb N$ and an $n\in\mathbb N$, we define $C_{\le n}:=\{k\in C:\ k\le n\}$. Then the asymptotic density of a set $A\subset\mathbb N$ is defined by
$$
dA\ :=\ \lim_{n\to\infty} \frac{\text{card}\,A_{\le n}}n.
$$
Replacing the limit by  the lower or the upper one, we get the lower and the upper asymptotic densities: $\underline{d}A$ and $\overline{d}A$.

\begin{thm}
\label{t5}
For any $0<\alpha\le\beta\le 1$ there is $\sum a_n$ such that $E(a_n)$ is a Cantorval and $\underline{d}K^c(a_n)=\alpha$ and $\overline{d}K^c(a_n)=\beta$.
\end{thm}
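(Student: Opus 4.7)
The plan is to construct $\sum a_n$ by concatenating carefully scaled finite segments drawn from multigeometric Cantorval-generating patterns. For each rational $\gamma = p/q \in (0,1]$ one can produce a multigeometric Cantorval series of the form $(x_1, \ldots, x_q; s)$, i.e.\ with $j$-th period $x_1 s^{j-1}, x_2 s^{j-1}, \ldots, x_q s^{j-1}$, in which exactly $p$ of the $q$ positions per period lie in $K^c$. The values $x_i$ should be chosen so that the finite set $E(x_1, \ldots, x_q)$ contains an interval (for instance, by arranging the small $x_i$'s to sum to at least the size of each large $x_j$), which together with the multigeometric structure forces the achievement set to be a Cantorval rather than a Cantor set by the Guthrie-Nymann Classification Theorem. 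The ratio $s$ is taken small enough that the Kakeya status at each position depends only on the within-block pattern, not on what follows.

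For the case $\alpha = \beta$, when $\alpha$ is rational one uses the corresponding periodic pattern directly; for irrational $\alpha$ one concatenates finite segments of multigeometric Cantorval sequences with rational densities tending to $\alpha$, each segment much longer than all previous segments combined. For $\alpha < \beta$, I alternate between long segments realizing density $\alpha$ and long segments realizing density $\beta$, where the $n$-th segment is much longer than the total length of all preceding ones, and the magnitudes decay so rapidly between segments that the total sum following the end of the $n$-th segment is much smaller than the last term of that segment. This ensures the within-segment Kakeya pattern survives up to finitely many exceptions per segment, and the running density of $K^c$ oscillates so that $\underline{d}\,K^c(a_n) = \alpha$ and $\overline{d}\,K^c(a_n) = \beta$.

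The main obstacle is verifying that the resulting achievement set is a Cantorval. The rapid decay of scales between segments implies $E(a_n)$ decomposes, up to a well-controlled error, as the algebraic sum of the individual segment achievement sets. Each segment, being a finite initial piece of a multigeometric Cantorval series with its tail much smaller than its last term, contributes an interval to its local achievement set by construction; these intervals then propagate to a dense open subset of the global $E$, so $E$ is regularly closed. On the other hand, at every scale a small gap is preserved between the translated copies of the subsequent-segment achievement set, so the boundary $\partial E$ is nowhere dense, and it contains no isolated points since each boundary point is approximated by distinct boundary points coming from deeper segments. Appealing to the Guthrie-Nymann Classification Theorem excludes the three non-Cantorval cases, and Corollary \ref{c3} is consistent with $K^c(a_n)$ being infinite, giving the desired conclusion.
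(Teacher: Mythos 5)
First, a point of orientation: Theorem \ref{t5} is not proved in this paper at all --- it is quoted as a result of Marchwicki and Miska from \cite{MM} (their Theorem 3.5 and its refinements), so the only thing to compare your argument against is the closely related machinery the paper actually deploys in the proof of Theorem \ref{t7}. Measured against that machinery, your outline has two genuine gaps. The first is the opening assertion that for every rational $p/q\in(0,1]$ there is a periodic multigeometric pattern $(x_1,\dots,x_q;s)$ whose achievement set is a Cantorval and in which exactly $p$ of the $q$ positions per period carry a reversed Kakeya condition. This is precisely the hard part of the whole subject: as the paper stresses, in all classical examples at least half of the conditions are reversed, and producing patterns with few reversed conditions is the content of the nontrivial block construction $b_1^n=2^{n+1}$, $b_2^n=2^n+1$, $b_j^n=2^{n+3-j}$ (which yields exactly $2$ reversed conditions out of $n+2$). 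Realizing an \emph{arbitrary} prescribed rational density, while simultaneously keeping an interval in the achievement set and infinitely many gaps, is asserted but not constructed; your heuristic ``arrange the small $x_i$'s to sum to at least the size of each large $x_j$'' does not by itself control which indices land in $K^c$, and the Guthrie--Nymann Classification Theorem cannot ``force'' the Cantorval case --- one must separately exhibit an interval and infinitely many gaps.

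The second gap is the calibration of the scales between consecutive segments, and here your prescription would actively fail. You require that the total sum following the end of the $n$-th segment be \emph{much smaller} than the last term of that segment. But if $r_{N_k}$ drops below the spacing of the finite subsum set $F_{N_k}$ (and the spacing of $F_{N_k}$ is typically comparable to, not much smaller than, the last term $a_{N_k}$ --- in the block construction the spacing is $q_k$ while $a_{N_k}=2q_k$), then every maximal $r_{N_k}$-close subset of $F_{N_k}$ is a singleton, $\Delta_{r_{N_k}}F_{N_k}=0$, and Theorem \ref{t6} forces $E(a_n)$ to be totally disconnected: you get a Cantor set, not a Cantorval. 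The whole point of the paper's choice $q_{k+1}=q_k/(3\cdot 2^{n_{k+1}})$ and the inductive verification of property $(\beta_k)$ is that the remainder must sit in a narrow window --- small enough that $a_{N_k}>r_{N_k}$ creates a gap, yet large enough ($r_{N_k}>q_k$, inequality \eqref{szesc}) that the next group's subsums bridge the $q_k$-spacing of the previous level. Relatedly, your statement that ``each segment contributes an interval to its local achievement set'' is vacuous, since a finite segment has a finite set of subsums; what must be shown is the quantitative closeness condition of Theorem \ref{t6}, and ``rapid decay between segments'' is exactly what destroys it. The density bookkeeping (long alternating segments giving $\underline{d}=\alpha$, $\overline{d}=\beta$) is the easy and correct part of your plan; the construction of the segments and the matching of scales is where the proof actually lives, and it is missing.
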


Additionally, they asked directly if it is possible to construct a series $\sum a_n$ such that $E(a_n)$ is a Cantorval and $dK^c(a_n)=0$ \cite[Problem 5.2]{MM}.

\section{The main result}

 We are going to give a positive answer to the question in this note, but we have to describe a tool necessary for the proof first.

Each multi-interval set $W$  is the union of infinitely many different finite families of closed and bounded intervals. However, one of those families stands out and is uniquely determined by $W$, namely, the family of all connectivity components of $W$. Thus, unless specified otherwise, by writing $W=\bigcup_{i=1}^nP_i$, we mean that $\{P_i: \ 1\le i\le n\,\}$ is the family of connectivity components of $W$. Given a multi-interval set $W=\bigcup_{i=1}^nP_i$, we define
$$
||W||\ :=\ \max_{1\le i\le n}|P_i|.
$$
If $(W_n)_{n\in\mathbb N}$ is a descending sequence of multi-interval sets, then $(||W_n||)_{n\in\mathbb N}$ is a non-increasing sequence bounded from below  and hence it converges to a non-negative number.
Even more, $\bigcap_nW_n$ contains an interval if and only if $\lim_{n\to\infty}||W_n||>0$ \cite[Lemma 2.7]{MNP}.

We need one more important definition. Given an $\epsilon>0$, a finite nonempty set $B\subset\mathbb R$ will be called \textsl{$\epsilon$-close} either if $\text{card}\,B=1$ or if distance between any two consecutive elements of $B$ (in the natural order inherited from $\mathbb R$) does not exceed $\epsilon$. An $\epsilon$-close subset $C$ of a finite set $B$ will be called \textsl{maximal} if it is maximal with repect to inclusion. The set of all  maximal $\epsilon$-close subsets of $B$ will be denoted by $\mathcal{M}_\epsilon(B)$. Every nonempty finite subset of real numbers has a unique decomposition into finitely many disjoint maximal $\epsilon$-close subsets. Given a nonempty finite set $C$, we define the \textsl{stretch} of the set by $S(C):= \max C-\min C$. Finally, given a finite nonempty set $B\subset\mathbb R$, we define
$$
\Delta_\epsilon B\ :=\ \max\bigl\{S(C):\ C\in \mathcal{M}_\epsilon(B)\,\bigr\}.
$$
The above definition is useful in characterizing series with nonempty interior of their achievement sets. The following theorem comes from \cite{MNP}.

\begin{thm}
\label{t6}
Let $\sum a_n$ be a convergent series of non-negative and non-increasing terms. Then the following conditions are equivalent:
\begin{itemize}
\item[(i)] the achievement set $E(a_n)$ contains an interval;
\item[(ii)] \ $\lim_{n\to\infty}\Delta_{r_n}F_n\,>\,0$;
\item[(iii)] \ $\lim_{k\to\infty}\Delta_{r_{n_k}}F_{n_k}\,>\,0$ for some increasing sequence $(n_k)$ of indices.
\end{itemize}
\end{thm}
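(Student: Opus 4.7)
The plan is to reduce the theorem to the external lemma \cite[Lemma 2.7]{MNP} via a canonical descending sequence of multi-interval sets whose approximation to $E(a_n)$ is controlled precisely by the quantities $\Delta_{r_n}F_n$.

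First, I would introduce the auxiliary sets
$$
W_n\ :=\ F_n+[0,r_n]\ =\ \bigcup_{x\in F_n}[x,x+r_n],
$$
which are clearly multi-interval sets (finite unions of intervals). Using the identity $F_{n+1}=F_n\cup(F_n+a_{n+1})$ together with $a_{n+1}+r_{n+1}=r_n$, a one-line computation gives
$$
W_{n+1}\ =\ F_n+\bigl([0,r_{n+1}]\cup[a_{n+1},r_n]\bigr)\ \subset\ F_n+[0,r_n]\ =\ W_n,
$$
so $(W_n)$ is descending. The identity $E(a_n)=F_n+E(a_{n+1},a_{n+2},\ldots)$ and the inclusion $E(a_{n+1},\ldots)\subset[0,r_n]$ yield $E(a_n)\subset W_n$ for every $n$; conversely, any point of $\bigcap_n W_n$ is the limit of a sequence $(y_n)$ with $y_n\in F_n\subset E(a_n)$, so closedness of $E(a_n)$ gives $\bigcap_n W_n=E(a_n)$.

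The next step is to identify the connectivity components of $W_n$ with the maximal $r_n$-close subsets of $F_n$. If $C=\{c_1<c_2<\cdots<c_m\}\in\mathcal{M}_{r_n}(F_n)$, then consecutive intervals $[c_i,c_i+r_n]$ and $[c_{i+1},c_{i+1}+r_n]$ overlap (since $c_{i+1}-c_i\le r_n$), so their union is the single interval $[c_1,c_m+r_n]$; maximality of $C$ guarantees that this component is disjoint from the components corresponding to any other element of $\mathcal{M}_{r_n}(F_n)$. Consequently every connected component of $W_n$ has length $S(C)+r_n$ for some $C\in\mathcal{M}_{r_n}(F_n)$, and therefore
$$
\|W_n\|\ =\ \Delta_{r_n}F_n\,+\,r_n.
$$

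Now \cite[Lemma 2.7]{MNP} says that $\bigcap_n W_n$ contains an interval if and only if $\lim_{n\to\infty}\|W_n\|>0$. Because $r_n\to 0$, this limit coincides with $\lim_{n\to\infty}\Delta_{r_n}F_n$, which gives the equivalence (i)$\Leftrightarrow$(ii). The implication (ii)$\Rightarrow$(iii) is immediate. For (iii)$\Rightarrow$(ii), note that $(\|W_n\|)$ is a non-increasing sequence (since $W_{n+1}\subset W_n$ forces each component of $W_{n+1}$ to be contained in a component of $W_n$); if a subsequence $\|W_{n_k}\|=\Delta_{r_{n_k}}F_{n_k}+r_{n_k}$ tends to a positive limit, then monotonicity propagates this positivity to the full sequence.

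The main technical point — and the only place where care is needed — is the verification that the connectivity components of $W_n$ are in bijection with $\mathcal{M}_{r_n}(F_n)$ via $C\mapsto[\min C,\max C+r_n]$; everything else is a direct application of the cited lemma together with the monotonicity of $(\|W_n\|)$.
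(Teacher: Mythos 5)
Your proposal is correct, and it follows essentially the route the paper intends: the paper does not reprove Theorem \ref{t6} (it cites \cite{MNP}), but it assembles exactly your ingredients around the statement, namely Lemma 2.7 of \cite{MNP} on descending multi-interval sets and, later, the iterates $I_n=\bigcup_{f\in F_n}[f,f+r_n]$ with $E=\bigcap_n I_n$. Your identification $\|W_n\|=\Delta_{r_n}F_n+r_n$ via the bijection between components of $W_n$ and maximal $r_n$-close subsets of $F_n$ is the intended link, and the monotonicity of $\|W_n\|$ correctly handles (iii)$\Rightarrow$(ii).
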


We are now ready to formulate and prove our new result.

\begin{thm}
\label{t7}
For every sequence $(m_n)$ of positive integers convergent to $\infty$ there is an achievable Cantorval $E(a_n)$ such that
$$
\lim_{n\to\infty}\,\frac{\text{card}\,\{i\le n:\ a_i\le r_i\,\}}{m_n}\ \ =\ \ 0.
$$
\end{thm}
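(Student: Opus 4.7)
The strategy is to prescribe in advance a very sparse set $\{N_k\}_{k\in\mathbb{N}}$ of reversed-Kakeya indices and build $(a_n)$ around it. Because $m_n\to\infty$, one can inductively choose $N_1<N_2<\ldots$ so that $\inf_{j\ge N_k}m_j \ge k^2$. If the series we construct has $K^c(a_n)\subseteq\{N_k\}$, then for every $n\in[N_k,N_{k+1})$ the numerator $\text{card}\{i\le n:a_i\le r_i\}$ is at most $k$, hence the ratio in the theorem is bounded by $k/m_n\le 1/k\to 0$. Thus the quantitative density statement is essentially ``free'' once the qualitative construction is produced.

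\textbf{Construction.} Fix a ratio $q\in(0,1/2)$. I would define $(a_n)$ in blocks: at each ``special'' index $N_k$ a term $a_{N_k}$ is placed whose size equals (or is slightly smaller than) the total future sum $r_{N_k}$, so that the $N_k$-th Kakeya condition is reversed. The intermediate indices $n\in(N_{k-1},N_k)$ carry geometrically decreasing terms $a_n\approx q^{n-N_{k-1}}a_{N_{k-1}}$, chosen so that monotonicity holds across the special positions and Kakeya holds at every non-special index (which follows automatically from $q<1/2$ on a pure geometric piece). The amplitudes and block-lengths are determined by a backward-telescoping normalization, driven by the recursion $a_{N_k}=r_{N_k}$ together with the constraint that the run from $N_{k-1}+1$ to $N_k$ is long enough to make $r_{N_{k-1}}$ small but short enough that the interior Kakeya inequalities survive.

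\textbf{Achievement-set analysis.} By Theorem \ref{t6}, it is enough to prove $\liminf_k \Delta_{r_{N_k}} F_{N_k}>0$. The mechanism is as follows: the equality $a_{N_k}=r_{N_k}$ means that when the index $N_k$ is adjoined, each point $x\in F_{N_k-1}$ generates two points $x,\,x+a_{N_k}\in F_{N_k}$ whose distance equals precisely the tail $r_{N_k}$, so successive clusters glue across the threshold of $r_{N_k}$-closeness. Combined with the cluster structure inherited from the previous blocks, $F_{N_k}$ contains a maximal $r_{N_k}$-close subset whose stretch is bounded below by a quantity comparable to $a_{N_k}$, which after normalization stays uniformly positive. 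This forces intervals inside $E(a_n)$. Since $K(a_n)$ is cofinite off the sparse set $\{N_k\}$, it is infinite, so by Theorem \ref{t1} the set $E(a_n)$ is not a multi-interval set; the interval criterion rules out a Cantor set; and $(a_n)$ does not vanish eventually. The Guthrie--Nymann classification then leaves only the Cantorval case.

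\textbf{Main obstacle.} The real work is the inductive bookkeeping. The equation $a_{N_k}=r_{N_k}$ couples all blocks globally, so one must carry out a normalization that simultaneously (i) keeps $(a_n)$ monotone across the junction of two blocks, (ii) keeps each geometric run thin enough so that Kakeya at the interior indices is preserved, and (iii) prevents the cluster stretch inside $F_{N_k}$ from collapsing relative to $r_{N_k}$ as the block-lengths $N_{k+1}-N_k$ are allowed (and in fact forced) to grow to infinity. Getting all three conditions to coexist for an arbitrarily sparse prescription $\{N_k\}$ is the genuinely technical step and is likely where the paper's construction becomes delicate.
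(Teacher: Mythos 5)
Your outer reduction is exactly the paper's and is correct: choose $N_1<N_2<\cdots$ so that $m_n\ge (k+1)^2$ for $n>N_k$, arrange only $O(1)$ reversed Kakeya conditions per block, and the ratio is at most $O(k)/k^2\to0$. The fatal gap is in the construction itself. Your claim that ``Kakeya holds at every non-special index (which follows automatically from $q<1/2$ on a pure geometric piece)'' is false, because $r_n$ is the tail of the \emph{whole} series, not of the current geometric piece, and your normalization $a_{N_k}=r_{N_k}$ forces that tail to stay heavy. Indeed, writing $L_k:=N_k-N_{k-1}$ and $a_{N_{k-1}+s}=q^s a_{N_{k-1}}$ for $0<s<L_k$, one gets
$r_{N_{k-1}}=\frac{q-q^{L_k}}{1-q}\,a_{N_{k-1}}+2r_{N_k}$, and combining this with $a_{N_{k-1}}=r_{N_{k-1}}$ yields $2r_{N_k}=\frac{1-2q+q^{L_k}}{1-q}\,a_{N_{k-1}}\ \ge\ \frac{1-2q}{1-q}\,a_{N_{k-1}}$. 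Hence for every interior index $n=N_{k-1}+s$ we have $r_n\ge a_{N_k}+r_{N_k}=2r_{N_k}\ge\frac{1-2q}{1-q}a_{N_{k-1}}$ while $a_n=q^s a_{N_{k-1}}$, so $a_n\le r_n$ for all $s\ge\log_q\frac{1-2q}{1-q}$, a constant depending only on $q$. Thus all but boundedly many indices of each block carry \emph{reversed} conditions, so $dK^c(a_n)=1$ --- the opposite of what is needed. The same identity shows that the three requirements you list as the ``main obstacle'' are not delicate but mutually contradictory: monotonicity demands $a_{N_k}\le a_{N_k-1}=q^{L_k-1}a_{N_{k-1}}$, whereas $a_{N_k}=r_{N_k}\ge\frac{1-2q}{2(1-q)}a_{N_{k-1}}$ is a fixed positive fraction of $a_{N_{k-1}}$; these are incompatible once $L_k$ is large, and $L_k\to\infty$ is forced by the density requirement.

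The interval-producing mechanism is also too weak in principle. The single reversed condition $a_{N_k}=r_{N_k}$ merely adjoins to each $x\in F_{N_k-1}$ the point $x+r_{N_k}$, so the clusters it glues have stretch of order $r_{N_k}\to0$; your asserted lower bound ``comparable to $a_{N_k}$'' also tends to $0$, and no normalization can make it uniformly positive while $\sum a_n$ converges. To keep $\Delta_{r_{N_k}}F_{N_k}$ bounded away from $0$ one needs maximal $r_{N_k}$-close clusters containing on the order of $r_{N_{k-1}}/r_{N_k}$ points, and this is precisely what the paper's combinatorial gadget supplies: the $k$-th block is $(2^{n_k+1},\,2^{n_k}+1,\,2^{n_k},\,\ldots,\,2)\,q_k$, whose subsums contain the entire arithmetic progression $\{jq_k:\ 2^{n_k}\le j\le 2^{n_k+2}-1\}$ although only its first \emph{two} terms violate Kakeya; the inter-block scaling $q_{k+1}=q_k/(3\cdot2^{n_{k+1}})$ makes the whole tail after block $k$ smaller than the block's last term $2q_k$, so Kakeya holds at every remaining index; and the progressions telescope via $D_{k+1}=D_k+C_{k+1}$, giving $\Delta_{r_{N_k}}F_{N_k}\ge\sum_{i=1}^k(3\cdot2^{n_i}-1)q_i$, bounded below. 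Some such device --- a short prefix of each block whose subsums fill a long arithmetic progression at the scale of the block's tail --- is the essential idea missing from your proposal.
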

In particular, choosing $m_n:=n$, we get the classic asymptotic density and the positive answer to Problem 5.2 from \cite{MM}.

\begin{cor}
\label{c8}
There is an achievable Cantorval $E(a_n)$ such that the set $\{n:\ a_n\le r_n\,\}$ has asymptotic density 0.
\end{cor}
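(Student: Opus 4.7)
The plan is to obtain Corollary \ref{c8} as an immediate specialization of Theorem \ref{t7}, so no independent argument is required beyond choosing the right sequence $(m_n)$. Specifically, I would define $m_n:=n$ for every $n\in\mathbb N$. Then $(m_n)$ is a sequence of positive integers with $m_n\to\infty$, and Theorem \ref{t7} supplies an achievable Cantorval $E(a_n)$ satisfying
$$
\lim_{n\to\infty}\,\frac{\text{card}\,\{i\le n:\ a_i\le r_i\,\}}{n}\ =\ 0.
$$

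It then remains only to unpack the definitions. By the notation introduced before Theorem \ref{t4}, $K^c(a_n)=\{i\in\mathbb N:\ a_i\le r_i\}$, so the numerator above equals $\text{card}\,K^c(a_n)_{\le n}$. The displayed limit therefore asserts exactly that
$$
dK^c(a_n)\ =\ \lim_{n\to\infty}\frac{\text{card}\,K^c(a_n)_{\le n}}{n}\ =\ 0,
$$
which is the conclusion of the corollary.

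There is no genuine obstacle here: the corollary is, by design, the density-zero instance of the more refined scale provided by Theorem \ref{t7}. The only small sanity check worth including in the written proof is that the limit in Theorem \ref{t7} with $m_n=n$ literally coincides with the asymptotic density of $K^c(a_n)$, which is immediate from the definition of $dA$ recalled just before Theorem \ref{t5}. All the substantive work (constructing $(a_n)$, ensuring that $E(a_n)$ is a Cantorval, and controlling the count of reversed Kakeya conditions) is encapsulated in Theorem \ref{t7}.
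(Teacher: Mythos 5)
Your proposal is correct and matches the paper exactly: the authors obtain Corollary \ref{c8} by the same specialization $m_n:=n$ in Theorem \ref{t7}, noting that this choice recovers the classic asymptotic density. Nothing further is needed.
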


\begin{proof}[Proof of the Theorem \ref{t7}]
We start by describing a general scheme of constructing the desired sequence $(a_i)_{i\in\mathbb N}$. It combines the clever groups of terms introduced in the proof of Thm. 3.5 from \cite{MM} with the idea of multipliers $q_k$ for scaling of particular groups of terms used in the definition of generalized Ferens series in \cite{MNP}. Given a positive integer $n$, define
$$
b_1^n:=2^{n+1},\ b_2^n:=2^n+1\quad \text{and} \qquad b_j^n:=2^{n+3-j}\ \ \text{for $3\le j\le n+2$.}
$$
Then the set of all subsums of the terms $b_j^n$, $j=1,\,2,\,\ldots,\,n+2$, is exactly
\begin{multline*}
\left\{2j-2: \ j\in\{1,2,\ldots,2^{n-1}\}\right\}\\ \cup\ \left\{j:\ 2^n\le j\le 4\cdot2^n-1\,\right\}\\ \cup\ \left\{4\cdot2^n-1+2j:\ j\in\{1,2,\ldots,2^{n-1}\}\right\}.
\end{multline*}

We are ready to define the desired sequence $(a_n)_{n\in\mathbb N}$. We start by putting $N_0:=0$ and choosing a sequence $(N_k)_{k\in\mathbb N}$ of positive integers such that
\begin{equation}
\label{jeden}
\forall k\in\mathbb N \quad\forall n>N_k\qquad m_n\,\ge\,(k+1)^2
\end{equation}
and
\begin{equation}
\label{dwa}
\forall k\in\mathbb N\qquad N_k\,-\,N_{k-1}\ \ge\ 3.
\end{equation}
Next define $n_k:=N_k-N_{k-1}-2$ for all $k\in\mathbb N$, and $q_1:=1$ and
\begin{equation}
\label{trzy}
q_{k+1}\ :=\ \frac 1{3\cdot2^{n_{k+1}}}\,q_k \qquad \text{for $k\in\mathbb N$.}
\end{equation}
In particular, all $n_k$'s are positive integers because of \eqref{dwa}. Moreover,
\begin{equation}
\label{cztery}
\forall k\in\mathbb N\qquad q_{k+1}\ \le\ \frac{q_k}6.
\end{equation}
Then we complete the definition by setting $a_{N_{k-1}+i}:=b^{n_k}_iq_k$ for $k\in\mathbb N$ and $i\in\{1,2,\ldots, n_k+2\}$. Thus our sequence $(a_n)$ consists of groups of terms. The $k$-th group is built according to the first paragraph of the proof with $n=n_k$ and then multiplied by $q_k$. The first term of the $k$-th group has index $N_{k-1}+1$ and the last term has index $N_k$, and the group consists of $n_k+2$ terms.

We will show now that  the achievement set $E(a_n)$ is a Cantorval with the property claimed in the thesis. First, observe that the sequence $(a_i)$ decreases. For $i\ne N_k$, $k\in\mathbb N$, it is evident, because of the definition of groups of terms. If $k\in\mathbb N$, then
$$
a_{N_k}\ =\ 2q_k\ \overset{\eqref{trzy}}{=}\ 3\cdot2^{n_{k+1}+1}q_{k+1}\ >\ 2^{n_{k+1}}q_{k+1}\ =\ a_{N_k+1}.
$$
Next, we are going to determine the set $K(a_i)$ (or, equivalently, $K^c(a_i)$). Given a $k\in\mathbb N$, we have
\begin{align*}
r_{N_k}\ =\ \sum_{i=N_k+1}^\infty a_i\ &=\ \sum_{j=k}^\infty\,\sum_{i=N_j+1}^{N_{j+1}}a_i\ =\ \sum_{j=k}^\infty (5\cdot2^{n_{j+1}}-1)q_{j+1}\\[.1in]
&<\ \sum_{j=k}^\infty5\cdot2^{n_{j+1}}q_{j+1}\ \overset{\eqref{trzy},\eqref{cztery}}{\le}\ \frac53\sum_{j=k}^\infty \frac1{6^{j-k}}q_k\,=\,2q_k\,=\,a_{N_k}
\end{align*}
and hence the inequalities $a_i>r_i$ hold for all $i=N_{k-1}+3,\,N_{k-1}+4,\ldots, N_k$, $k\in\mathbb N$. It is easy to see directly from the definition of the sequence $(a_i)$ that $a_{N_{k-1}+2}\le r_{N_{k-1}+2}$ for all $k$. Similarly, we have $a_{N_{k-1}+1}\le r_{N_{k-1}+1}$ for all $k$, but here, if $n_k=1$ we need an additional estimate:
\begin{equation}
\label{szesc}
r_{N_k}\ >\ (5\cdot2^{n_{k+1}}-1)q_k\ \overset{\eqref{trzy}}{=}\ \tfrac53q_k\,-\,q_{k+1}\ \overset{\eqref{cztery}}{>}\ q_k.
\end{equation}
Thus,
\begin{equation}
\label{piec}
K^c(a_i)\ =\ \bigcup_{k=1}^\infty \bigl\{N_{k-1}+1,\, N_{k-1}+2\bigr\}.
\end{equation}
Since the set $K(a_i)$ is infinite, $E(a_i)$ has infinitely many gaps and the Guthrie-Nymann Classification Theorem will tell us that it is a Cantorval as soon as we show that $E(a_i)$ contains an interval. We will do it by using Prop. 2.6 from \cite{MNP}.

Define sets $C_k\,:=\,\bigl\{pq_k:\ p\in\{2^{n_k},\, 2^{n_k}+1,\ldots,\,2^{n_k+2}-1\}\,\bigr\}$ for $k\in\mathbb N$. All elements of $C_k$ are among subsums of the $k$-th subgroup of terms $a_{N_{k-1}+1},\,a_{N_{k-1}+2},\,\ldots,\,a_{N_k}$. Next, we are going to define a sequence of sets $(D_n)_{n\in\mathbb N}$  inductively. We start with $D_1:=C_1$. Clearly,
\begin{itemize}
\item[($\alpha_1$)] \ $D_1\,\subset F_{N_1}$;
\item[($\beta_1$)] \ the distance between any two consecutive elements of $D_1$ is $q_1$;
\item[($\gamma_1$)] \ $\min D_1\,=\,2^{n_1}q_1$ and $\max D_1\,=\,(2^{n_1+2}-1)q_1$.
\end{itemize}
Let $k$ be a positive integer such that the set $D_k$ has been defined and satisfies the properties:
\begin{itemize}
\item[($\alpha_k$)] \ $D_1\,\subset F_{N_k}$;
\item[($\beta_k$)] \ the distance between any two consecutive elements of $D_k$ is at most $q_k$;
\item[($\gamma_k$)] \ $\min D_k\,=\,\sum_{i=1}^k2^{n_i}q_i$ and $\max D_k\,=\,\sum_{i=1}^k(2^{n_i+2}-1)q_i$.
\end{itemize}
Define then
$$
D_{k+1}\ :=\ D_k\,+\,C_{k+1}.
$$
Clearly, $D_{k+1}$ satisfies $(\alpha_{k+1})$ and $(\gamma_{k+1})$. We are going to demonstrate that it satisfies $(\beta_{k+1})$ as well.

Let $x,\,y\in D_{k+1}$, $x<y$, be two consecutive points of $D_{k+1}$. Then $y=d+pq_{k+1}$ for some $d\in D_k$ and $p\in\bigl\{2^{n_{k+1}},\,2^{n_{k+1}}+1,\,\ldots,\,2^{n_{k+1}+2}-1\,\bigr\}$.

If $p>2^{n_{k+1}}$, then $\tilde{y}\le x<y$ for $\tilde{y}:=(p-1)q_{k+1}+d$ and hence $|x-y|\le q_{k+1}$.

If $p=2^{n_{k+1}}$, then $d>\min D_k$, because $y$ has a predecessor in $D_{k+1}$. Thus, $d$ has a predecessor in $D_k$. Denote it by $\tilde{d}$. By $(\beta_k)$ we get $d>\tilde{d}\le d-q_k$. Clearly, $\min(\tilde{d}+C_{k+1})<y$. On the other hand,
\begin{align*}
y\ &=\ d+2^{n_{k+1}}q_{k+1}\ =\ d-q_k+\bigl(q_k+2^{n_{k+1}}q_{k+1}\bigr)\ \overset{\eqref{trzy}}{=}\ d-q_k+2^{n_{k+1}+2}q_{k+1} \\[.1in]
&\overset{\text{$(\beta_k)$}}{\le}\ \tilde{d}+\bigl(2^{n_{k+1}+2}-1\bigr)q_{k+1}\,+\,q_{k+1}\ \le\ \max\bigl(\tilde{d}+C_{k+1}\bigr)\,+\,q_{k+1}.
\end{align*}
Since the distance between any consecutive elements of $\tilde{d}+C_k$ is equal to $q_{k+1}$, there is an $\hat{y}\in\tilde{d}+C_{k+1}$ such that $y-q_{k+1}\le\hat{y}<y$. Since $\hat{y}$ belongs to the set $D_{k+1}$, the property $(\beta_{k+1})$ has been proven and  the inductive definition of the sequence $(D_k)_{k\in\mathbb N}$ has been completed.

Since $r_{N_k}>q_k$ by \eqref{szesc} and $D_k\subset F_{N_k}$ by $(\alpha_k)$, we see that
$$
\Delta_{r_{N_k}}F_{N_k}\ \overset{\text{$(\gamma_k),\,(\beta_k)$}}\ \max D_k\ -\ \min D_k\ \overset{\text{$(\gamma_k)$}}{=}\ \sum_{i=1}^k(3\cdot2^{n_i}-1)q_i
$$
and hence $\lim_{k\to\infty}\Delta_{r_{N_k}}F_{N_k}\,>\,0$ which implies that $E(a_i)$ contains an interval by Proposition 2.6 from \cite{MNP}. Thus, $E(a_i)$ is a Cantorval.

Finally, for any $n\in\mathbb N$, there is a unique $k=k_n\in\mathbb N$ such that $N_{k-1}<n\le N_k$. Thus,
$$
\frac{\text{card}\{i\le n:\ a_i\le r_i\,\}}{m_n}\ \overset{\eqref{piec},\eqref{jeden}}{\le}\ \frac{2k}{k^2}\ \xrightarrow[n\to\infty]\ 0.
$$
\end{proof}

The strength of Theorem \ref{t7} can be better illustrated by looking at generalized densities.  Given any infinite subset $A\subset\mathbb N$, the $A$-density of a set $B\subset \mathbb N$ is defined by
$$
d_AB\ :=\ \lim_{n\to \infty}\,\frac{card\, B_{\le n}}{card\, A_{\le n}}
$$
if the limit exists. In particular, the classic asymptotic density is $d_{\mathbb N}$. Also, $d_AA=1$ for every infitite $A\subset \mathbb N$. The family $\mathcal{I}_A:=\{B\subset\mathbb N:\ d_AB=0\,\}$ is an ideal of sets (but not a $\sigma$-ideal). A set $C\subset \mathbb N$ is finite if and only if $d_AC=0$ for all infinite $A$. It follows that $\bigcap_A\mathcal{I}_A = \text{Fin}$. The Theorem \ref{t7} says that for every infinite $A\subset\mathbb N$, there is a series $\sum a_n$ such that $E(a_n)$ is a Cantorval and $d_AK^c(a_n)=0$, that is, none of the generalized densities provides a necessary condition in the form: if $E(a_n)$ is a Cantorval, then $d_AK^c(a_n)>0$. This makes us believe that the following open problem \cite[Problem 5.1]{MM} has a positive answer, but we have not proof of it.

\begin{op}
For each infinite $K\subset\mathbb N$ with infinite complement there is a  convergent series $\sum a_n$ of positive and monotone terms such that $E(a_n)$ is a Cantorval and $K^c(a_n)=K$.
\end{op}

Since the above hypothesis seems to be very resilient, it would be nice and enlightening to find at least a large family of subsets suitable for reversed Kakeya conditions for Cantorval producing series. For example, is it true that for every set $K\subset\mathbb N$ such that (a) $\text{card}\,K=\text{card}\, K^c=\infty$ and (b) if $n\in K$, then at least one of the numbers $n-1$ or $n+1$ belongs to K, there is a  convergent series $\sum a_n$ of positive and monotone terms such that $E(a_n)$ is a Cantorval and $K^c(a_n)=K$ ?

\section{The Lebesgue measure of the boundary}

One of unanswered questions of the theory of achievement sets is the question about the Lebesgue measure of the boundary of achievable Cantorvals. Can the measure be  positive? The boundary of a Cantroval is a Cantor set and we know from elementary measure theory that the Lebesgue measure of a Cantor set can be positive. Of course, it is very easy to produce an example of a Cantorval with boundary of positive measure. It suffices to take any central Cantor set of positive measure (see \cite[Problem 14b on p. 64]{R1988}) and fill in all open intervals removed in the odd-numbered steps of the standard geometric construction. Unfortunately, we still do not know whether any  of the Cantorvals obtained in this way is achievable. We doubt it, but we have no proof of it. On the other hand, in all cases when the measure of the boundary of an achievable Cantorval was computed, it was zero always \cite{BP}, \cite{BPW}, \cite{B}, \cite[Thm. 3.3]{MNP}. Thus, we are interested in the measure of the boundary of Cantorvals used in the proof of Theorem \ref{t7}.

Given any sequence $(n_k)$ of positive integers, we define a squence $(a_n)$ by
$$
a_i\ :=\ b_{i-\sum_{j=1}^{k-1}(n_j+2)}^{n_k}q_k \qquad \text{for $k\in\mathbb N$ and $\sum_{j=1}^{k-1}(n_j+2)\,<\,i\,\le\,\sum_{j=1}^k(n_j+2)$}
$$
where the terms $b_j^n$ have been defined in the first paragraph of the proof of the Theorem \ref{t7} and $q_k:=3^{1-k}\prod_{j=1}^{k-1}2^{-n_j}$. We use here the standard conventions that $\sum_{i=1}^0\ldots=0$ and
$\prod_{i=1}^0\ldots=1$. Then, as we know from the proof of Theorem \ref{t7}, $E(a_n)$ is a Cantorval and it will be called a \textsl{Marchwicki--Miska Cantorval}.

\begin{thm}
\label{t8}
The boundary of any Marchwicki-Miska Cantorval is of Lebesgue measure zero.
\end{thm}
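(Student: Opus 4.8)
The plan is to compute the Lebesgue measure of $E(a_n)$ itself and show it equals the measure of the interval $[0,r_0]$ it would fill if it were a full interval, so that the complementary $E$-gaps carry full measure and the boundary, being contained in a nowhere dense set, must be null. More precisely, since the boundary $\partial E$ of a Cantorval is a Cantor set contained in $E$, I would aim to show that the measure of $E$ equals the total length of its $E$-intervals. If I can prove that $E$ is, up to a null set, the union of its non-degenerate connectivity components (the $E$-intervals), then $\partial E$, which is disjoint from the interiors of these intervals, is null.

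First I would set up the measure bookkeeping for the self-similar group structure. The series splits into groups, the $k$-th group being a scaled copy (by $q_k$) of the standard block $b^{n_k}_j$. The achievement set therefore decomposes as $E = F_{N_k} + r_k E^{(k)}$-type Minkowski sums, where the tail contributes a shrinking set; concretely, for each $k$ one has $E \subset F_{N_k} + [0, r_{N_k}]$, and the elements of $F_{N_k}$ are spaced according to properties $(\beta_k)$, $(\gamma_k)$. The key quantity is the ratio between the sum of the gaps created at stage $k$ and the measure available. Using the explicit subsum description from the first paragraph of the proof of Theorem~\ref{t7}, each block of $n+2$ terms produces an achievement set that is an arithmetic-progression-like set with spacing $2$ at the two ends and spacing $1$ in the middle; scaling by $q_k$ and summing over groups, I would track exactly which gaps open up and estimate their total length.

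Next I would estimate the measure of the boundary directly via a covering argument rather than through $E$-intervals, which is likely cleaner. The boundary points are limits of gap endpoints, and at each stage $k$ the set $F_{N_k}$ is a finite union of points; covering each point by the interval of radius $r_{N_k}$ gives $E \subset \bigcup_{x \in F_{N_k}}[x, x+r_{N_k}]$, so $\lambda(E) \le \mathrm{card}\,F_{N_k}\cdot r_{N_k}$. The main technical content is to show that the \emph{boundary} admits a much more efficient cover: only those points of $F_{N_k}$ that lie near a gap endpoint contribute, and their proportion decays. The decay comes from the fact that the reversed Kakeya conditions occur only at the first two indices of each group, so each group contributes only a bounded number of "fresh" gap endpoints while the number of boundary-carrying intervals at scale $r_{N_k}$ grows like the product $\prod 2^{n_j}$, against which the surviving boundary pieces have vanishing relative length.

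The hard part will be making the covering of $\partial E$ genuinely efficient, i.e.\ proving that the boundary is trapped in a set whose measure estimate at stage $k$ tends to $0$ rather than merely staying bounded. A naive bound $\lambda(\partial E) \le \mathrm{card}\,F_{N_k}\cdot r_{N_k}$ need not vanish, since $\mathrm{card}\,F_{N_k}$ grows roughly like $\prod 2^{n_j}$ while $r_{N_k} \approx q_k \approx 3^{1-k}\prod 2^{-n_j}$, giving a product of order $3^{-k}$ times a bounded combinatorial factor; so in fact this crude estimate may already tend to zero if the combinatorial factor is controlled, and the real work is verifying that $\mathrm{card}\,F_{N_k}\cdot r_{N_k} \to 0$. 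I would therefore compute $\mathrm{card}\,F_{N_k}$ (or rather the number of components of the multi-interval approximant $W_k$) precisely using the block structure, combine it with $r_{N_k} \le 2q_k$ from the inequality $r_{N_k} < a_{N_k} = 2q_k$ established in the proof of Theorem~\ref{t7}, and conclude $\lambda(\partial E) \le \lambda(E \setminus \bigcup E\text{-intervals}) = 0$. The subtle point to get right is distinguishing the measure of the boundary from the measure of $E$: one must verify that the interior of $E$ has full measure in $E$, equivalently that $\|W_k\| \to \ell > 0$ supplies a positive lower bound on interval lengths so that the residual non-interval part is exactly the null boundary.
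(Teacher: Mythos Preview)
Your proposal has a genuine gap. The covering bound you propose, $\lambda(\partial E)\le \text{card}\,F_{N_k}\cdot r_{N_k}$, cannot tend to zero: since $E\subset\bigcup_{f\in F_{N_k}}[f,f+r_{N_k}]$ and $E$ is a Cantorval, hence contains an interval, we have $\text{card}\,F_{N_k}\cdot r_{N_k}\ge \mu(E)>0$ for every $k$. Your estimate ``$\text{card}\,F_{N_k}$ grows roughly like $\prod 2^{n_j}$'' undercounts by a factor of order $4^k$ (each block contributes $4\cdot 2^{n_j}$ subsums, not $2^{n_j}$), and with that correction the product $\text{card}\,F_{N_k}\cdot r_{N_k}$ is bounded away from zero, as it must be. So the ``crude estimate'' you hope will already vanish cannot vanish; the cover of $\partial E$ you wrote down is really a cover of all of $E$. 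Your sentence about restricting to ``points of $F_{N_k}$ near a gap endpoint'' is the right instinct but is not developed into an argument, and the final appeal to $\|W_k\|\to\ell>0$ only yields $\text{int}\,E\ne\emptyset$, not that $\text{int}\,E$ has full measure in $E$.

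The paper takes the route you sketch in your first paragraph and then abandon: it computes $\mu(\text{Fr}\,E)=\mu(E)-\sum_{P\in\mathcal P_E}\mu(P)$ exactly. To do this it tracks the multi-interval approximants $I_{N_k}$ and shows that at stage $k$ each old component shrinks symmetrically by $2^{n_k+1}q_k$ while $2^{n_k}$ new short components of length $r_{N_k}$ and $2^{n_k}$ new gaps of length $2q_k-r_{N_k}$ appear inside it. This yields closed formulas $\text{card}\,\mathcal P_k=\prod_{j\le k}(2^{n_j}+1)$, an explicit expression for $\mu(E)$ as $r_0$ minus the total gap length, and an explicit expression for $\sum_P\mu(P)$ by grouping $E$-intervals according to the first stage at which their centre appears. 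The two expressions are then shown to coincide via the telescoping identity $1+\sum_{i=1}^{k-1}2^{n_i}\prod_{j<i}(2^{n_j}+1)=\prod_{j<k}(2^{n_j}+1)$. That exact bookkeeping, not a coarse covering bound, is what the argument requires.
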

\begin{proof}
Let $E=E(a_n)$ be a Marchwicki--Miska Cantorval. Denote the family of all $E$-intervals by $\mathcal{P}_E$. Then
$$
E\ =\ \bigsqcup_{P\in\mathcal{P}_E} \text{int}\,P\ \sqcup\ \text{Fr}\,E.
$$
Thus, by the countability of the family $\mathcal{P}_E$, we obtain
\begin{equation}
\label{b1}
\mu\,\text{Fr}\,E\ =\ \mu E\,-\,\sum_{P\in\mathcal{P}_E}\mu P.
 \end{equation}
In order to compute the difference, we need deep understanding of the geometry of $E$. For that purpose, we will use the $N_k$-th iterates $I_{N_k}$ of the achievement set where $N_k:=\sum_{i=1}^k(n_i+2)$ is the index of the last term in the $k$-th group of terms of the sequence $(a_n)$ \--- as in the proof of Theorem \ref{t7}. We know that $E=\bigcap_kI_{N_k}$ where  $I_{N_k}=\bigcup_{f\in F_{N_k}}[f,\,f+r_{N_k}] $ (see \cite[Fact 21.8]{BFPW1}). Defining for $k\in\mathbb N$
$$
M_k\ :=\ I_{N_k}\,\cap\,[0,\,r_{N_{k-1}}],
$$
we see that
\begin{multline*}
M_k\ =\bigsqcup_{j=1}^{2^{n_k-1}}\bigl[(2j-2)q_k,\,(2j-2)q_k+r_{N_k}\bigr]\ \sqcup\ \bigl[2^{n_k}q_k,\,(4\cdot2^{n_k}-1)q_k+r_{N_k}\bigr] \\
\sqcup\ \bigsqcup_{j=1}^{2^{n_k-1}}\bigl[(4\cdot2^{n_k}-1+2j)q_k,\,(4\cdot2^{n_k}-1+2j)q_k+r_{N_k}\bigr]
\end{multline*}
and
$$
I_{N_k}\ =\ \bigcup_{f\in F_{N_{k=1}}}(f\,+\,M_k).
$$
The iteration $I_{N_1}$ has $2^{n_1}$ gaps and $2^{n_1}+1$ intervals (that is, component itervals or, in other words, $I_{N_k}$-intervals). When passing from $I_{N_{k-1}}$ to $I_{N_k}$, $2^{n_k}$ new gaps appear in each interval of $I_{N_{k-1}}$. Denote the family of all new $I_{N_k}$-gaps by $\mathcal{G}_k$ for $k\in\mathbb N$. All elements of $\mathcal{G}_k$ are of the same length equal to $2q_k-r_{N_k}$. As a consequence, when passing from $I_{N_{k-1}}$ to $I_{N_k}$, each component of $I_{N_{k-1}}$ breaks up into $2^{n_k}+1$ intervals of $I_{N_k}$. More precisely, any breaking up interval $P$ of $I_{N_{k-1}}$ shrinks by $2^{n_k+1}q_k$ symmetrically, that is, by $2^{n_k}q_k$ on each side. Further, on each side of the shrinked $P$ arise $2^{n_k-1}$ new component intervals of $I_{N_k}$, all of length $r_{N_k}$ and with all gaps between any neighbouring  remnants of $P$ equal to $2q_k-r_{N_k}$.

For $k\in\mathbb N$, let us denote:
\begin{itemize}
\item \ $\mathcal{P}_k$\ \--- the family of all component intervals of $I_{N_k}$;
\item \ $\mathcal{P}'_k$\ \--- the family of component intervals of $I_{N_k}$ that are concentric with intervals of $I_{N_{k-1}}$. Informally, these are the symmetrically shrinked intervals of $\mathcal{P}_{k-1}$;
\item \ $\mathcal{P}''_k\ :=\ \mathcal{P}_k\setminus\mathcal{P}'_k$. Informally, they are the new intervals of $I_{N_k}$.
\end{itemize}
All intervals in $\mathcal{P}''_k$ have the same length equal to $r_{N_k}$ and there are $2^{n_k}\cdot \text{card}\,\mathcal{P}_{k-1}$ of them. Hence, by induction,
$$
\text{card}\,\mathcal{P}_k\ =\ \prod_{j=1}^k(2^{n_j}+1)
$$
and
\begin{equation}
\label{b2}
\text{card}\,\mathcal{P}''_k\ =\ 2^{n_k}\prod_{j=1}^{k-1}(2^{n_j}+1)\ =\ \text{card}\,\mathcal{G}_k.
\end{equation}
Since $E=[0,\,r_0]\setminus\bigcup_{k\in\mathbb N}\bigcup_{G\in\mathcal{G}_k}G$, we get
\begin{align}
\label{b3}
\notag \mu\, E\ &=\ r_0\ -\ \sum_{k=1}^\infty (2q_k-r_{N_k})\cdot\text{card},\mathcal{G}_k \\
&=\ r_0\,+\,\sum_{k=1}^\infty2^{n_k}r_{N_k}\prod_{j=1}^{k-1}(2^{n_j}+1)\ -\ \sum_{k=1}^\infty 2^{n_k+1}q_k\prod_{j=1}^{k-1}(2^{n_j}+1).
\end{align}

Let $C$ be the set of the centers of $E$-intervals and, for $k\in\mathbb N_0$, let $C_k$ be the set of centers of $I_{N_k}$-intervals. Then $C_k\subset C_{k+1}$ and $C\,=\,\bigcup_kC_k$. Given an $E$-interval $P$, we define $\eta(P)$ to be the smallest non-negative integer $k$ such that the center of $P$ belongs to $C_k$. For example, $\eta(P)=0$ if and only if $P$ is the central $E$-interval. Further, for $k\in\mathbb N$,
$$
\text{card}\,\bigl\{P\in\mathcal{P}_E:\ \eta(P)=k\,\bigr\}\ =\ \text{card}\,\mathcal{P}''_k.
$$
Given any $k\in\mathbb N_0$, all $E$-intervals $P$ with $\eta(P)=k$ are of the same length equal to $r_{N_k}\,-\,\sum_{i=k+1}^\infty2^{n_i+1}q_i$ and with his observation we are ready for the final computations:
\begin{align*}
\mu\,\text{Fr}\,E\ &\overset{\text{\eqref{b1}}}{=}\ \mu\,E\,-\,\sum_{k=0}^\infty\sum_{\substack{P\in\mathcal{P}_E\\ \eta(P)=k}}\mu\,P \\[.1in]
&=\ \mu\,E\,-\,\left(r_0\,-\,\sum_{i=1}^\infty2^{n_i+1}q_i\right)\ -\ \sum_{k=1}^\infty\text{card}\,\mathcal{P}''_k\cdot\left(r_{N_k}\,-\,\sum_{i=k+1}^\infty 2^{n_i+1}q_i\right) \\[.1in]
&\overset{\text{\eqref{b2},\eqref{b3}}}{=}\ \sum_{k=1}^\infty 2^{n_k+1}q_k\,+\,\sum_{k=1}^\infty\left(2^{n_k}\biggl(\prod_{j=1}^{k-1}(2^{n_j}+1)\biggr)\sum_{i=k+1}^\infty 2^{n_i+1}q_i\right) \,-\,\sum_{k=1}^\infty 2^{n_k+1}q_k\prod_{j=1}^{k-1}(2^{n_j}+1)\\
&=\  \sum_{k=1}^\infty 2^{n_k+1}q_k\,+\,\sum_{i=2}^\infty\left(2^{n_i+1}q_i\sum_{k=1}^{i-1}2^{n_k}\biggl(\prod_{j=1}^{k-1}(2^{n_j}+1)\biggr)\right)\,-\,\sum_{k=1}^\infty 2^{n_k+1}q_k\prod_{j=1}^{k-1}(2^{n_j}+1)\\
\intertext{(and now we interchange the symbols $i$ and $k$ in the second summation only)}
&=\ \sum_{k=1}^\infty 2^{n_k+1}q_k\,+\,\sum_{k=2}^\infty\left(2^{n_k+1}q_k\sum_{i=1}^{k-1}2^{n_i}\biggl(\prod_{j=1}^{i-1}(2^{n_j}+1)\biggr)\right)\,-\,\sum_{k=1}^\infty 2^{n_k+1}q_k\prod_{j=1}^{k-1}(2^{n_j}+1)\\
&=\ \sum_{k=2}^\infty 2^{n_k+1}q_k\left[1+\sum_{i=1}^{k-1}2^{n_i}\biggl(\prod_{j=1}^{i-1}(2^{n_j}+1)\biggr)-\prod_{j=1}^{k-1}(2^{n_j}+1)\right] \\
&=\ \sum_{k=2}^\infty 2^{n_k+1}q_k\cdot0\ =\ 0.
\end{align*}
\end{proof}

\begin{OP}
Is the boundary of any achievable Cantorval a set of Lebesgue measure zero?
\end{OP}

\bibliographystyle{amsplain}

\begin{thebibliography}{10}
\bibitem{B} M. Banakiewicz, \textit{The Lebesgue measure of some M-Cantorval}, J. Math. Anal. Appl. 471(2019) 170--179
\bibitem{BP} M. Banakiewicz, F. Prus-Wi\'sniowski, \textit{M-Cantorvals of Ferens type}, Math. Slovaca, 67(4)(2017), 1--12
\bibitem{BFGSW} A. Bartoszewicz, M. Filipczak, S. G\l \c{a}b, J. Swaczyna, F. Prus-Wi\'sniowski, \textit{On generating regular Cantorvals connected with geometric Cantor sets}, arXiv:1706.03523v1.
\bibitem{BFHLP} A. Bartoszewicz, M. Filipczak, G. Horbaczewska, S. Lindner, F. Prus-Wi\'{s}niowski, \textit{On the operator of center of distances between the spaces of closed subsets of the real line}, Topol. Methods Nonlinear Anal. (2023), doi: 10.12775/TMNA/2023.023
\bibitem{BFPW1} A. Bartoszewicz, M. Filipczak, F. Prus-Wi\'sniowski, \textit{Topological and algebraic aspects of subsums of series}, Traditional and present-day topics in real analysis, 345--366, Faculty of Mathematics and Computer Science. University of \L \'od\'z, \L \'od\'z, 2013.
\bibitem{BFPW2} A. Bartoszewicz, M. Filipczak, F. Prus-Wi\'sniowski, \textit{Semi-fast convergent sequences and $k$-sums of central Cantor sets}, European J. Math. 6(2020), 1523-1536
\bibitem{BKP} W. Bielas, M. Kula, Sz. Plewik, \textit{On compact subsets of the reals}, Topol. Appl. (2024), 108854, doi: https://doi.org10.1016/j.topol.2024.108854
\bibitem{BPW}  W. Bielas, S. Plewik, M. Walczy\'nska, \textit{On the center of distances}, Eur. J. Math. 4 (2018), no. 2, 687--698.
\bibitem{E07}
K.I. Ero\u{g}lu, \textit{On the arithmetic sum of Cantor sets}, Nonlinearity 20 (2007) 1145--1161
\bibitem{GN88}
 J.A. Guthrie, J.E. Nymann, \textit{The topological structure of the set of subsums of an infinite series}, Colloq. Math.  55 (1988), 323-327
\bibitem{H} Hornig, H., \textit{ \:{U}ber beliebige Teilsummen absolut konvergenter Reihen}, Studia Math. Phys. 49(1941) 316--320
\bibitem{Jones} R. Jones, \textit{Achievement sets of sequences}, Am. Math. Mon. 118:6 (2011), 508--521.
\bibitem{Kakeya} S. Kakeya, \textit{On the partial sums of an infinite series}, T\^{o}hoku Sc. Rep. 3 (1915), 159--164.
\bibitem{Kakeya2} S. Kakeya, \textit{On the set of partial sums of an infinite series}, Proc. Tokyo Math.-Phys. Soc., 2nd series, 7(1914), 250-251,  \text{\path{ https://doi.org/10.11429/ptmps1907.7.14_250}}
\bibitem{MNP} J. Marchwicki, P. Nowakowski, F. Prus-Wiśniowski, \textit{Algebraic sums of achievable sets involving Cantorvals}, arXiv: 2309.01589v1[math. CA], 4 Sep 2023.
\bibitem{MM}
J. Marchwicki, J. Miska, \textit{On Kakeya conditions for achievement sets}, Result in Math. (2021) 76:181; //doi.org./10.1007/s00025-021-01479-2
\bibitem{MO} P. Mendes, F. Oliveira, \textit{On the topological structure of the arithmetic sum of two Cantor sets}, Nonlinearity. 7 (1994), 329--343
\bibitem{Menon} P. Kesava Menon, \textit{On a class of perfect sets}, Bull. Amer. Math. Soc. 54(1948), 706--711
\bibitem{MPP} P. Miska, F. Prus-Wi\'{s}niowski, J. Ptak, \textit{More on Kakeya Conditions for Achievement Sets}. Results Math 78, 113 (2023). https://doi.org/10.1007/s00025-023-01890-x
\bibitem{MY01} C. Moreira, J. Yoccoz, \textit{Stable intersections of Cantor sets with large Hausdorff dimension}, Ann. of Math. 154 (2001) 45-96

\bibitem{NS} J.E. Nymann, R.A. S\'{a}enz, \textit{On the paper of Guthrie and Nymann on subsums of infinite series}, Colloq. Math. 83 (2000), 1--4.
\bibitem{Palis87} J.Palis, \textit{Homoclinic orbits, hyperbolic dynamics and dimensions of Cantor sets}, Contemp. Math. 53 (1987) 203--216
\bibitem{PY97} J. Palis, J. C. Yoccoz, \textit{On the arithmetic sum of regular Cantor sets}, Ann. Inst. Henri Poincar\'{e} 14:4 (1997) 439--456
\bibitem{Pourba18}
M. Pourbarat, \textit{On the arithmetic difference of middle Cantor sets}, Discrete Contin. Dyn. Syst. 38:9 (2018) 4259-4278
\bibitem{PWT18} F. Prus-Wi\'{s}niowski, F. Tulone,\textit{The arithmetic decomposition of central Cantor sets}, J. Math. Anal. Appl. 467 (2018) 26-31
\bibitem{R1988} H.L. Royden, \textit{Real Analysis}, 3rd ed., Macmillan Publishing Company, New York 1988
\bibitem{Sannami92} A. Sannami, \textit{An example of a regular Cantor set whose difference is a Cantor set with positive Lebesgue measure}, Hokkaido Math. J. 21 (1992) 7--24
\bibitem{T19}
Y. Takahashi, \textit{Sums of two self-similar Cantor sets}, J. Math. Anal. Appl. 477 (2019) 613--62
\bibitem{WS} A.D. Weinstein, B.E. Shapiro, \textit{On the structure of a set of $\overline{\alpha}$-representable numbers}, Izv. Vys\v{s}. U\v{c}ebn. Zaved. Matematika. 24 (1980), 8--11.
\end{thebibliography}

\end{document}